\newcommand{\nwc}{\newcommand}
\nwc{\draftdate}{\today}
\newtheorem{thm}{Theorem}[section]
\newtheorem{lemma}[thm]{Lemma}
\newcommand{\barint}{\hbox{$\int$\kern-0.75\intwidth
\vrule width 0.5\intwidth height 2.4pt depth -2pt\kern0.25\intwidth}}
\newlength\intwidth
\newcommand\avint{\hbox{\hbox{$\displaystyle \int$}\hbox{\kern-.9em{$-$}}}}
\newcommand\smavint{\hbox{\hbox{$\int$}\hbox{\kern-.75em{$-$}}}}
\nwc{\st}{^{\mbox{\it st}}}
\nwc{\qref}[1]{(\ref{#1})}
\nwc{\veloc}{v}
\nwc{\rhoc}{\beta}
\nwc{\hl}{\hat{L}}
\def\Xint#1{\mathchoice
{\XXint\displaystyle\textstyle{#1}}%
{\XXint\textstyle\scriptstyle{#1}}%
{\XXint\scriptstyle\scriptscriptstyle{#1}}%
{\XXint\scriptscriptstyle\scriptscriptstyle{#1}}%
\!\int}
\def\XXint#1#2#3{{\setbox0=\hbox{$#1{#2#3}{\int}$}
\vcenter{\hbox{$#2#3$}}\kern-.51\wd0}}
\def\dashint{\Xint-}
\nwc{\intRp}{\int_0^\infty}
\nwc{\aint}{\dashint}
\nwc{\aaint}{\dashint}
\newcommand{\cF}{{\cal F}}
\newcommand{\R}{\mathbb R}
\newcommand{\be}{\begin{eqnarray}}
\newcommand{\ee}{\end{eqnarray}}
\newcommand{\nn}{\nonumber}
\newcommand{\ben}{\begin{eqnarray*}}
\newcommand{\een}{\end{eqnarray*}}
\journal{arXiv.org}
\begin{document}
	
\title{On nonnegative solutions for the Functionalized Cahn-Hilliard equation with degenerate mobility}
\author[1]{Shibin Dai\corref{cor1}}
\ead{sdai4@ua.edu}
\author[2]{Qiang Liu} 
\ead{matliu@szu.edu.cn}

\author[1]{Toai Luong}
\ead{ttluong1@crimson.ua.edu}

\author[3]{Keith Promislow}
\ead{PROMISLO@msu.edu}
\cortext[cor1]{Corresponding author.}

\address[1]{Department of Mathematics, University of Alabama, Tuscaloosa, AL 35487-0350, USA. 
} 
\address[2]{
College of Mathematics and Statistics, Shenzhen University, Shenzhen, 518060, China. 
} 
\address[3]{
Department of Mathematics, Michigan State University, East Lansing, MI 48824, USA.
}

\begin{abstract}
The Functionalized Cahn-Hilliard equation has been proposed as a model for the interfacial energy of 
phase-separated mixtures of amphiphilic molecules. 
We study the existence of a nonnegative weak solutions of a gradient flow of the Functionalized 
Cahn-Hilliard equation subject to a degenerate mobility $M(u)$ 
that is zero for $u\leq 0$. Assuming the initial data $u_0(x)$ is positive,  we construct a weak solution as 
the limit of solutions corresponding to non-degenerate mobilities and verify that it satisfies an energy 
dissipation inequality. 

\end{abstract}

\begin{keyword} Weak solutions \sep nonnegative solutions \sep the functionalized Cahn-Hilliard equation \sep 
degenerate mobility
\end{keyword}

\maketitle

\section{Introduction}
The Functionalized Cahn-Hilliard (FCH) free energy was introduced in \cite{promislow-2011}. It is 
an extension of the model of Gompper and Goos \cite{GG-94},
proposed to describe the  free energy of microemulsions of amphiphilic molecules and solvent. 
Amphiphilic molecules are formed by chemically bonding two components whose individual interactions 
with the solvent are energetically favorable and unfavorable, respectively. 
When blended with the solvent, amphiphilic molecules have a propensity to phase separate, 
forming amphiphilic rich domains that are thin, generically the thickness of two molecules, in at least 
one direction. For a binary mixture with composition described by $u$ on $\Omega\subset\R^d$, the 
FCH free energy takes the form
\begin{align}\label{fch-ener}
	\cF(u)=\int_\Omega\frac{1}{2}|-\Delta u+W'(u)|^2-\eta\left(\frac{1}{2}|\nabla u|^2+W(u)\right)dx,
\end{align}
where $\Omega$ is a bounded domain in $\R^d$ with the boundary $\partial\Omega$. 
The FCH equation, which is the gradient flow of the FCH energy functional, is written as 
\begin{align}
	\label{fch-eqn1} u_t&=\nabla\cdot(M(u)\nabla\mu), \quad (x,t)\in\Omega_T:=\Omega\times(0,T), \\	
	\label{fch-eqn2}\mu&=-\Delta\omega+W''(u)\omega-\eta\omega, \\
	\label{fch-eqn3}\omega&=-\Delta u+W'(u),
\end{align}	
where $T>0$ is a given number. 
This equation is often subject to periodic or zero-flux boundary conditions on $\partial\Omega$. 
We also prescribe initial values $u(x,0)=u_0(x)$ for all $x\in\Omega$, where $u_0\in H^2(\Omega)$ is 
a given function. 
The function $\mu$ is the chemical potential defined by the first variational derivative of the FCH 
energy functional \qref{fch-ener}. 
The diffusion mobility $M: \R\to [0,\infty)$ is nonnegative and continuous. 
The double well potential $W: \R\to\R$ is smooth enough and has two unequal depth local minima 
at 0 and $b_+>0$ for which $W(0)=0>W(b_+)$, and $W'$ has exactly three zeroes at 0, $b_+$, and 
$b_0 \in (0,b_+)$. 
The parameter $\eta>0$ characterizes key structural properties of the amphiphilic molecules. 
The function $u$ is the order parameter, representing the relative volume fraction of amphiphilic 
materials, with $u\equiv 0$ being pure solvent and $u\equiv b_+$ being pure amphiphile. 

Highly amphiphilic lipids have long hydrophobic tails.  The energy of low concentrations of highly 
amphiphilic materials grows exponentially with the tail length \cite{Bates:highlyamphiphilic}. This leads 
to models in which the concavity of the left well, $W''(0)$, is large \cite{FCH:benchmark}.  Indeed, 
arguing formally,
a perturbation $v$ of a $u\equiv 0$ distribution satisfies the linear diffusion equation
\begin{align} \label{FCH-zero}
u_t = \Delta M(0)\left( -\Delta W''(0)-\eta\right)\left(-\Delta + W''(0)\right) u.
\end{align}
To prevent spuriously high diffusivity at low concentrations, it  is natural to take the mobility of 
lipids so that the product $M(0)(W''(0))^2$ remains bounded. 
To compensate for the high energy of dispersed amphiphilic molecules  requires mobilities that are zero  
or asymptotically zero. In this paper we fix the low-density energy and establish the existence and 
energy dissipation of weak solutions with vanishing mobility.

In \cite{dlp:weakFCH} it was proved that for  
\begin{align} \label{M-abs}
	M(u)=|u|^m,
\end{align}
there exists a weak solution for this class of degenerate FCH equation. 
Here $0<m<\infty$ if the spatial dimension $d\leq 4$, and $0<m<4/(d-4)$ if $d\geq 5$.
The degenerate mobility \qref{M-abs} is an 
appropriate choice for the degenerate Cahn-Hilliard (CH) equation, which models phase separations in 
composite materials 
\cite{dd:onesidedCH, dd:degenerateCH, dd:numericCH,dd:weak, lms:degenerateCH}. 
 It has been shown by 
numerical simulations and by asymptotic analysis that for $d\geq 2$
the degenerate mobility \qref{M-abs} does not guarantee the 
weak solution for the degenerate CH to remain positive, even if the initial value is positive 
\cite{dd:onesidedCH, dd:degenerateCH, dd:numericCH, lms:degenerateCH}.
This is a consequence of the  Gibbs-Thomson effect 
(see also \cite{cen:CH, lms:insufficient, lms:response, voigt:comment} for discussions on the 
role of degenerate diffusions, and \cite{yin:1992} for the one-dimensional case).  
The situation is different for the degenerate FCH equation, since there is no Gibbs-Thomson effect
\cite{dp:bilayer,dp:competitive}. Instead, a  more important feature we need to guarantee is that the relative  
volume fraction of amphiphilic materials $u$ remains nonnegative. 
It is the purpose of this paper to show that, with a specific form of degenerate mobility 
\begin{align}\label{M(u)}
	M(u)=\left\{\begin{array}{l} u, \; u>0, \\
	0, \; u\leq 0, \end{array} \right.
\end{align}
 there exists
a nonnegative weak solution to the FCH equation \qref{fch-eqn1}-\qref{fch-eqn3} when the initial 
data $u_0(x)$ is positive for all $x\in\Omega$.
Since the FCH equation is a gradient flow for the FCH functional, it is natural to expect the weak solution 
to satisfy an energy dissipation inequality for $\cF(u)$. 
In \cite{dlp:weakFCH} this energy dissipation property for $\cF$ was stated informally and without a proof. 
In this paper we establish that the weak solutions dissipate the FCH energy. 

\subsection{Main result}
In this paper, we assume the dimension $d=1,2,3$ and the set $\Omega=(0,2\pi)^d$, 
and consider the periodic boundary condition on the boundary $\partial\Omega$. 
We choose the mobility $M(u)$ to be \qref{M(u)}, 
which is degenerate at $u=0$. 
The degeneracy of mobility at $u=0$ presents the technical difficulties. 
We also assume that $W\in C^4(\R)$ and there exist positive constants $C_1,C_2,C_3,C_4$ such that for all $z\in\R$,
\begin{align}
	\label{grow-1}C_1|z|^{2p}-C_2\leq&W(z)\leq C_3|z|^{2p}+C_4, \\
	\label{grow-2}&|W'(z)|\leq C_3|z|^{2p-1}+C_4, \\
	\label{grow-3}C_1|z|^{2p-2}-C_2\leq&W''(z)\leq C_3|z|^{2p-2}+C_4, \\
	\label{grow-4}&|W'''(z)|\leq C_3|z|^{2p-3}+C_4,\\
	\label{grow-5}2C_1|z|^{2p}-C_2\leq&W'(z)z,
\end{align} 
for a constant $1<p<\infty$ if $d=1,2$, and $1<p\leq (d-1)/(d-2)$ if $d=3$. 
One example of such a potential $W$ is
\begin{align*}
	W(u)=u^2(u-1)(u-2),
\end{align*}
with $p=2$. 
Under these assumptions, along with the cut-off degenerate mobility \qref{M(u)}, 
we will prove that the FCH equation \qref{fch-eqn1}-\qref{fch-eqn3} has a nonnegative weak solution that is not zero everywhere in $\Omega_T$, assuming that the initial data $u_0(x)$ is positive in $\Omega$.

Our analysis follows the same strategy as in \cite{dd:weak, dlp:weakFCH, Elliott-96} and involves two steps. 
The first step is to approximate the degenerate mobility $M(u)$ by a non-degenerate mobility $M_\theta(u)$ defined for 
$\theta\in(0,1)$ by
\begin{align}\label{M-theta}
	M_\theta(u)=\left\{\begin{array}{l} u, \; u>\theta, \\
	\theta, \; u\leq\theta . \end{array} \right.
\end{align}
The positive lower bound of $M_\theta(u)$ allows us to find a sufficiently regular weak solution to \qref{fch-eqn1}-\qref{fch-eqn3} with the positive mobility $M_\theta(u)$.

\begin{thm}\label{thm-main1}
	Let $u_0\in H^2(\Omega)$. With the potential $W(u)$ satisfying \qref{grow-1}-\qref{grow-5} and the mobility  
	$M_\theta(u)$ defined by \qref{M-theta}, for any given constant $T>0$, there exists a function $u_\theta$ that 
	satisfies the following conditions:
	\begin{enumerate}	
		\item $u_\theta\in L^\infty(0,T;H^5(\Omega))\cap C([0,T];H^l(\Omega))\cap C([0,T];C^{3,\alpha}(\bar{\Omega}))$, 
		where $1\leq l \leq 4$ and $0<\alpha<1/2$.
		
		\item  $\partial_tu_\theta\in L^2(0,T;(H^2(\Omega))')$.
		
		\item $u_\theta(x,0)=u_0(x)$ for all $x\in\Omega$.
		
		\item $u_\theta$ satisfies the FCH equation \qref{fch-eqn1}-\qref{fch-eqn3} in the following 
		weak sense:
		\begin{align}\label{FCH-w2}
		&\int_0^T\left\langle \partial_tu_\theta,\xi\right\rangle_{(H^2(\Omega)',H^2(\Omega))}dt 
		=-\int_0^T\int_\Omega M_\theta(u_\theta)\biggl(-\nabla\Delta\omega_\theta \nonumber \\
		& \qquad+W'''(u_\theta)\nabla u_\theta\omega_\theta+W''(u_\theta)\nabla\omega_\theta
		-\eta\nabla\omega_\theta\biggr)\cdot\nabla\xi dxdt
		\end{align}
		for all $\xi\in L^2(0,T;H^2(\Omega))$, where $\omega_\theta=-\Delta u_\theta+W'(u_\theta)$. 				
		In addition, for any $t\geq 0$, the following energy inequality holds 
		\begin{align} 
		&\int_\Omega\frac{1}{2}|\omega_\theta(x,t)|^2-\eta\left(\frac{1}{2}|\nabla u_\theta(x,t)|^2
		+W(u_\theta(x,t))\right)dx  \nonumber
		\\
		&+\int_0^t\int_\Omega M_\theta(u_\theta(x,\tau))|-\nabla\Delta\omega_\theta(x,\tau)
		+W'''(u_\theta(x,\tau))
		\nabla u_\theta(x,\tau)\omega_\theta(x,\tau)   \nonumber \\
		&+W''(u_\theta(x,\tau))\nabla\omega_\theta(x,\tau)-\eta\nabla\omega_\theta(x,\tau)|^2dxd\tau 
		\nonumber \\
		&\leq\int_\Omega\frac{1}{2}|-\Delta u_0+W'(u_0)|^2-\eta\left(\frac{1}{2}|\nabla u_0|^2+W(u_0)\right)dx.
		\label{ener-ineq1}
		\end{align}
		
		\item \label{th1-ineq}
		 If $u_0(x)>0$ for all $x\in\Omega$, then for any $0<\theta<1$,
		\begin{align}\label{bnd-neg1}
		\textup{ess sup}_{0\leq t \leq T}\int_\Omega\left|(u_\theta(x,t))_-+\theta\right|^2dx
		\leq C(\theta^2+\theta+\theta^{1/2}),
		\end{align}
		where $(u_\theta)_-=\min\{u_\theta,0\}$, 
		and $C$ is a generic positive constant that may depend on $d,T,\Omega,\eta,u_0$ 
		and $C_j(j=1,2,3,4)$ but not 
		on $\theta$.
	\end{enumerate}
\end{thm}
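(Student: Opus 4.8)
The plan is to construct $u_\theta$ by a Galerkin approximation, as in \cite{dd:weak,dlp:weakFCH,Elliott-96}, using the energy inequality as the central a priori bound; then pass to the limit, bootstrap the regularity, and obtain \qref{bnd-neg1} from a separate entropy estimate. Let $\{e_k\}$ be the $L^2(\Omega)$-orthonormal eigenfunctions of $-\Delta$ on $\Omega=(0,2\pi)^d$ under periodic boundary conditions (trigonometric polynomials, $e_0$ constant; the orthogonal projection $P_N$ onto $V_N:=\mathrm{span}\{e_k:|k|\le N\}$ commutes with $\Delta$). I would seek $u_N(t)=\sum_{|k|\le N}c^N_k(t)e_k$ solving
\[
\langle\partial_tu_N,v\rangle=-\int_\Omega M_\theta(u_N)\,\nabla\mu_N\cdot\nabla v\,dx,\qquad v\in V_N,
\]
with $\omega_N:=-\Delta u_N+W'(u_N)$ and $\mu_N:=P_N\bigl(-\Delta\omega_N+(W''(u_N)-\eta)\omega_N\bigr)$; since $M_\theta$ is Lipschitz and bounded and $W',W'',W'''$ are smooth, the right-hand side depends locally Lipschitz-continuously on $(c^N_k)$, so the Picard--Lindel\"of theorem gives a local-in-time solution.

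Taking $v=\mu_N$ and using that $P_N$ commutes with $\Delta$ (hence $\int_\Omega\mu_N\partial_tu_N\,dx=\langle\cF'(u_N),\partial_tu_N\rangle=\tfrac{d}{dt}\cF(u_N)$), one gets the energy identity $\tfrac{d}{dt}\cF(u_N)=-\int_\Omega M_\theta(u_N)|\nabla\mu_N|^2\,dx\le0$, while $v=e_0$ gives mass conservation $\int_\Omega u_N=\int_\Omega u_0$. Since $\cF(u_N)(0)=\cF(P_Nu_0)\to\cF(u_0)$, the growth hypotheses \qref{grow-1}--\qref{grow-5} and the Sobolev and interpolation inequalities available for $d\le3$ (and $1<p\le(d-1)/(d-2)$ when $d=3$) allow one to absorb the concave part of $\cF$ and conclude, uniformly in $N$ though with $\theta$-dependent constants where $M_\theta^{-1}\le\theta^{-1}$ enters: $u_N$ bounded in $L^\infty(0,T;H^2(\Omega))\hookrightarrow L^\infty(0,T;C^{0,1/2}(\bar\Omega))$, $\omega_N$ in $L^\infty(0,T;L^2(\Omega))$, $M_\theta(u_N)^{1/2}\nabla\mu_N$ in $L^2(\Omega_T)$, hence $\mu_N$ in $L^2(0,T;H^1(\Omega))$ (its spatial mean being controlled through its defining formula), and $\partial_tu_N$ in $L^2(0,T;(H^1(\Omega))')\subset L^2(0,T;(H^2(\Omega))')$. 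These bounds yield global existence of $u_N$; Aubin--Lions then gives a subsequence with $u_N\to u_\theta$ strongly in $C([0,T];H^s(\Omega))$ for every $s<2$, hence a.e.\ in $\Omega_T$, together with the corresponding weak-$*$ and weak limits of $\omega_N,\mu_N,\partial_tu_N$. Using that $M_\theta(u_N)\to M_\theta(u_\theta)$ and $W^{(j)}(u_N)\to W^{(j)}(u_\theta)$ strongly in every $L^q(\Omega_T)$, I would pass to the limit in the Galerkin identity to obtain a weak solution $u_\theta$ (with $\mu_\theta\in L^2(0,T;H^1)$ at first kept as an unknown satisfying \qref{fch-eqn2}--\qref{fch-eqn3} distributionally); the energy inequality \qref{ener-ineq1} follows from weak lower semicontinuity of $\cF$ on $H^2$ and of $v\mapsto\int_0^t\!\int_\Omega M_\theta(u_\theta)|v|^2$, and the initial condition from the $C([0,T];H^s)$ convergence.

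It then remains to upgrade regularity and recast the equation as \qref{FCH-w2}. Because $M_\theta(u_\theta)\ge\theta>0$ is H\"older continuous in $x$ (as $u_\theta\in L^\infty(0,T;C^{0,1/2})$), the equation, written out, reads $\partial_tu_\theta=M_\theta(u_\theta)\Delta^3u_\theta+(\text{terms of order}\le5)$ and is uniformly sixth-order parabolic; a standard but lengthy bootstrap --- combining $L^2$ maximal-regularity / energy estimates for the linearized operator with Schauder estimates and interpolation --- successively gains spatial derivatives and yields $u_\theta\in L^\infty(0,T;H^5(\Omega))\cap C([0,T];H^l(\Omega))$ for $1\le l\le4$ and, via $H^{5-\varepsilon}(\Omega)\hookrightarrow C^{3,\alpha}(\bar\Omega)$ for $d\le3$, $\alpha<1/2$, also $u_\theta\in C([0,T];C^{3,\alpha}(\bar\Omega))$, as well as $\partial_tu_\theta\in L^2(0,T;(H^2(\Omega))')$. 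This regularity (with $W\in C^4$) makes $\nabla\Delta\omega_\theta$ meaningful, so that differentiating $\mu_\theta$ and integrating by parts turns the weak equation into the stated form \qref{FCH-w2}. I expect this bootstrap to be the most technical step, since it must be pushed all the way to $H^5$ for a sixth-order quasilinear equation with only H\"older-continuous leading coefficient.

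For \qref{bnd-neg1}, assume $u_0>0$, so $u_0\ge\delta_0>0$ on $\bar\Omega$ by continuity. Let $\Psi_\theta\ge0$ be the entropy with $\Psi_\theta''=1/M_\theta$, normalized by $\Psi_\theta(1)=\Psi_\theta'(1)=0$, so that $\Psi_\theta(s)=\tfrac1{2\theta}(s-\theta)^2$ for $s\le\theta$ and $\Psi_\theta(s)=s\log s-s+1$ for $s\ge\theta$; since $u_0\ge\delta_0$, $\int_\Omega\Psi_\theta(u_0)$ is bounded uniformly in $\theta$. One checks $\Psi_\theta'(u_\theta)\in L^2(0,T;H^2)$, so it is an admissible test function in \qref{FCH-w2}; using $M_\theta\Psi_\theta''\equiv1$ and the regularity of $u_\theta$, a chain rule gives
\[
\int_\Omega\Psi_\theta(u_\theta(t))\,dx=\int_\Omega\Psi_\theta(u_0)\,dx-\int_0^t\!\!\int_\Omega\nabla u_\theta\cdot\nabla\mu_\theta\,dx\,ds=\int_\Omega\Psi_\theta(u_0)\,dx+\int_0^t\!\!\int_\Omega\Delta u_\theta\,\mu_\theta\,dx\,ds.
\]
Bounding $\bigl|\int_0^t\!\int_\Omega\Delta u_\theta\mu_\theta\bigr|\le\|\Delta u_\theta\|_{L^2(\Omega_T)}\|\mu_\theta\|_{L^2(\Omega_T)}$, using the $\theta$-uniform $L^\infty(H^2)$ bound for $\|\Delta u_\theta\|$ and --- via the dissipation bound $\int_0^T\!\int_\Omega M_\theta(u_\theta)|\nabla\mu_\theta|^2\le C$ together with the $\theta$-uniform control of the spatial mean of $\mu_\theta$ --- the estimate $\|\mu_\theta\|_{L^2(\Omega_T)}\le C\theta^{-1/2}$, one arrives at $\int_\Omega\Psi_\theta(u_\theta(t))\le C(1+\theta^{-1/2})$ uniformly in $\theta$ and $t$. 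Finally, since $\Psi_\theta(u_\theta)\ge\tfrac1{2\theta}(u_\theta-\theta)^2$ on $\{u_\theta\le\theta\}$ with $(u_\theta-\theta)^2\ge(u_\theta+\theta)^2=|(u_\theta)_-+\theta|^2$ on $\{u_\theta<0\}$, while $|(u_\theta)_-+\theta|^2=\theta^2$ on $\{u_\theta\ge0\}$, rearranging gives $\int_\Omega|(u_\theta)_-+\theta|^2\le\theta^2|\Omega|+2\theta\,C(1+\theta^{-1/2})\le C(\theta^2+\theta+\theta^{1/2})$, i.e.\ \qref{bnd-neg1}. The delicate points here are confirming that $\Psi_\theta'(u_\theta)$ is admissible and that the chain-rule identity holds in the weak setting, and tracking the powers of $\theta$ carefully.
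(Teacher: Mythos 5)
Your proposal follows the paper's strategy almost step for step: Galerkin approximation on the eigenspaces of $-\Delta$, the energy identity from testing with $\mu^N$, the a priori bounds on $u^N$ in $L^\infty(0,T;H^2(\Omega))$, $\omega^N$ in $L^\infty(0,T;L^2(\Omega))$, $\sqrt{M_\theta(u^N)}\nabla\mu^N$ in $L^2(\Omega_T)$ and $\partial_t u^N$ in $L^2(0,T;(H^2(\Omega))')$, Aubin--Lions and lower semicontinuity for \qref{ener-ineq1}, and the entropy with $\Phi_\theta''=1/M_\theta$ tested against the equation together with $\|\nabla\mu_\theta\|_{L^2(\Omega_T)}\le C\theta^{-1/2}$ to reach \qref{bnd-neg1}. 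Two points deserve comment. First, you propose to obtain the $H^5$ regularity by a maximal-regularity/Schauder bootstrap on the limiting sixth-order quasilinear equation with merely H\"older-continuous leading coefficient; the paper instead proves a $\theta$-dependent bound $\|u^N\|_{L^2(0,T;H^5(\Omega))}\le C_\theta$ already at the Galerkin level (via higher-order energy estimates, quoting \cite{dlp:weakFCH}) and passes it to the limit. Your route is not wrong in principle, but ``standard but lengthy'' understates it --- there is no off-the-shelf Schauder theory for such sixth-order operators, and the Galerkin-level estimate is the cleaner and intended path. Second, your explicit formula $\Psi_\theta(s)=\tfrac{1}{2\theta}(s-\theta)^2$ for $s\le\theta$ is incorrect: with $\Psi_\theta''=1/M_\theta$ and $\Psi_\theta(1)=\Psi_\theta'(1)=0$ one gets $\Psi_\theta(s)=\tfrac{1}{2\theta}s^2+(\ln\theta-1)s+1-\tfrac{\theta}{2}$ on that range (your formula is not even continuous at $s=\theta$, so it cannot satisfy $\Psi_\theta''=1/M_\theta$ distributionally). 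Luckily the only property you use is the lower bound $\Psi_\theta(s)\ge\tfrac{1}{2\theta}(s-\theta)^2\ge\tfrac{1}{2\theta}(s+\theta)^2$ for $s\le0$, which does hold for the correct $\Psi_\theta$ because $(\ln\theta-1)s+s+1-\theta=s\ln\theta+1-\theta\ge0$ there, so \qref{bnd-neg1} survives with the same powers of $\theta$; the paper works directly with $(s+\theta)^2\le2\theta\Phi_\theta(s)$. Finally, the chain-rule identity $\int_\Omega\Phi_\theta(u_\theta(t))\,dx-\int_\Omega\Phi_\theta(u_0)\,dx=-\int_0^t\int_\Omega\nabla\mu_\theta\cdot\nabla u_\theta\,dx\,d\tau$, which you flag as delicate but do not prove, is where the paper spends most of its effort (mollifying $\Phi_\theta$ and Steklov-averaging $u_\theta$ in time to justify testing with $\Phi_\theta'(u_\theta)$); that justification should be carried out rather than asserted.
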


The estimate \qref{bnd-neg1} in part \ref{th1-ineq} of Theorem \ref{thm-main1} is essential. 
It is the key to prove the existence of a nonnegative weak solution to the equation 
\qref{fch-eqn1}-\qref{fch-eqn3} with the degenerate mobility \qref{M(u)}.

The second step is to consider the limit of $u_\theta$ as $\theta\rightarrow 0$. 
The limiting function $u$ of $u_\theta$ does exist and, in the weak sense, solves the FCH equation 
\qref{fch-eqn1}-\qref{fch-eqn3} with the mobility $M(u)$ defined by \qref{M(u)}. 
It can be interpreted that $u$ solves the FCH equation in the open set $U_T=U\times(0,T)\subset\Omega_T$, 
where $U$ is any open subset of $\Omega$ with $\nabla\Delta^2u\in L^q(U_T)$ for some $q>1$. 
As for the set where $u$ does not have enough regularity, that set is contained in the set where $M(u)$ is degenerate 
and another set of Lebesgue measure zero. 
Moreover, if the initial data $u_0(x)$ is positive in $\Omega$, we obtain a nonnegative weak solution to the equation 
\qref{fch-eqn1}-\qref{fch-eqn3} that is not constantly zero in $\Omega_T$.

\begin{thm}\label{thm-main2}
	Let $u_0\in H^2(\Omega)$. With the potential $W(u)$ satisfying \qref{grow-1}-\qref{grow-5} and 
	the mobility  $M(u)$ 
	defined by \qref{M(u)}, for any given constant $T>0$, there exists a function $u$ that satisfies the 
	following conditions:
	
	\begin{enumerate}
		\item $u\in L^\infty(0,T;H^2(\Omega))\cap C([0,T];H^1(\Omega))
		\cap C([0,T];C^\alpha(\bar{\Omega}))$ 
		where $0<\alpha<1/2$.
		
		\item $\partial_tu\in L^2(0,T;(H^2(\Omega))')$.

		\item $u(x,0)=u_0$.
		
		\item $u$ can be considered as a weak solution for the FCH 
		equation \qref{fch-eqn1}-\qref{fch-eqn3} in 
		the following weak sense:
		\begin{enumerate}
			\item Let $P$ be the set where $M(u)$ is not degenerate, that is,
			\begin{align} 
			P:=\{(x,t)\in\Omega_T:u(x,t)>0\}. \nn
			\end{align}
			There exists a set $B\subset\Omega_T$ with $|\Omega_T\backslash B|=0$ and a function 
			$\zeta:\Omega_T\rightarrow\R^d$ satisfying $\chi_{B\cap P}M(u)\zeta\in 
			L^2(0,T;L^{2d/(d+2)}(\Omega))$, 
			here $\chi_{B\cap P}$ is the characteristic function of $B\cap P$, such that
			\begin{align}\label{FCH-w3}
			\int_0^T\left\langle\partial_tu,\phi\right\rangle_{(H^2(\Omega)',H^2(\Omega))}dt
			=-\int_{B\cap P}M(u)\zeta\cdot\nabla\phi dxdt
			\end{align}
			for all $\phi\in L^2(0,T;H^2(\Omega))$.
			
			\item Let  $\nabla\Delta^2u$ be the generalized derivative of $u$ in the sense of distributions. 
			If for some open set $U\subset\Omega$, $\nabla\Delta^2u\in L^q(U_T)$ for some $q>1$, 
			where $U_T=U\times(0,T)$, then we have
			\begin{align*}
			\zeta=-\nabla\Delta\omega+W'''(u)\nabla u\omega+W''(u)\nabla\omega-\eta\nabla\omega  
			\quad in\;U_T,
			\end{align*}
			where $\omega=-\Delta u+W'(u)$.
			
			\item In addition, for any $t\geq 0$, the following energy inequality holds:
			\begin{align}\label{ener-ineq2}
			&\int_\Omega\frac{1}{2}|\omega(x,t)|^2-\eta\left(\frac{1}{2}|\nabla u(x,t)|^2+W(u(x,t))\right)dx 
			 \nonumber\\
			&+\int_{\Omega_t\cap B\cap P} M(u(x,\tau))|\zeta(x,\tau)|^2dxd\tau  \nonumber\\
			&\leq\int_\Omega\frac{1}{2}|-\Delta u_0+W'(u_0)|^2-\eta\left(\frac{1}{2}|\nabla u_0|^2
			+W(u_0)\right)dx.
			\end{align}		
		\end{enumerate}	
		
		\item If $u_0(x)>0$ for all $x\in\Omega$, then $u(x,t)\geq 0$ for all $(x,t)\in\Omega_T$, 
		and $u(x,t)$ is not constantly zero in $\Omega_T$.
	\end{enumerate}
\end{thm}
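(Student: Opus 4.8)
The plan is to obtain Theorem~\ref{thm-main2} by passing to the limit $\theta\to 0$ in the family $\{u_\theta\}$ constructed in Theorem~\ref{thm-main1}, so the first task is to extract uniform-in-$\theta$ bounds. From the energy inequality \qref{ener-ineq1}, together with the coercivity assumptions \qref{grow-1} and \qref{grow-3} on $W$, I would control $\|\omega_\theta(t)\|_{L^2(\Omega)}$ and hence, via elliptic regularity applied to $\omega_\theta=-\Delta u_\theta+W'(u_\theta)$ on the torus (absorbing the lower-order $W'(u_\theta)$ term using \qref{grow-2} and a Gagliardo--Nirenberg/Sobolev interpolation, which is where the dimensional restriction $d\le 3$ and the growth exponent $p$ enter), a uniform bound $u_\theta\in L^\infty(0,T;H^2(\Omega))$. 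The dissipation term in \qref{ener-ineq1} gives a uniform bound on $\int_0^T\int_\Omega M_\theta(u_\theta)|\,\cdot\,|^2$, and pairing this with the uniform $\sqrt{M_\theta(u_\theta)}\le\sqrt{u_\theta+\theta}$ bound (again from the $H^2$ bound and Sobolev embedding into $C^\alpha$, so $u_\theta$ is uniformly bounded in $L^\infty(\Omega_T)$) yields $M_\theta(u_\theta)J_\theta$ bounded in $L^2(0,T;L^{2d/(d+2)}(\Omega))$, where $J_\theta$ denotes the flux integrand; this is exactly the regularity claimed for $\chi_{B\cap P}M(u)\zeta$. Testing \qref{FCH-w2} then bounds $\partial_t u_\theta$ uniformly in $L^2(0,T;(H^2(\Omega))')$.

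Next I would invoke Aubin--Lions with the chain $H^2(\Omega)\hookrightarrow\hookrightarrow H^1(\Omega)\hookrightarrow (H^2(\Omega))'$ to get, along a subsequence, $u_\theta\to u$ strongly in $C([0,T];H^1(\Omega))$ and in $C([0,T];C^\alpha(\bar\Omega))$, weakly-$*$ in $L^\infty(0,T;H^2(\Omega))$, with $\partial_t u_\theta\rightharpoonup\partial_t u$ in $L^2(0,T;(H^2(\Omega))')$; this delivers parts (i)--(iii). For the flux, the sequence $M_\theta(u_\theta)J_\theta$ is bounded in $L^2(0,T;L^{2d/(d+2)}(\Omega))$, so it has a weak limit, which I would call $\Xi$; I would then define $\zeta$ via $\Xi=\chi_P M(u)\zeta$ on the non-degenerate set $P$ (the set $B$ of full measure absorbs a null exceptional set coming from the a.e.-convergence step needed to identify limits of the nonlinear terms $W''(u_\theta),W'''(u_\theta),\nabla u_\theta\omega_\theta$, etc.). Passing to the limit in \qref{FCH-w2} gives \qref{FCH-w3}. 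For part (iv)(b), on any open $U$ where $\nabla\Delta^2 u\in L^q(U_T)$ the function $u$ has enough regularity that $-\Delta u+W'(u)=\omega\in L^\infty(0,T;H^2(U))$ with $\nabla\Delta\omega$ a genuine $L^1_{loc}$ function, and the dominated-convergence / strong-convergence bounds above let me identify the weak limit $\zeta$ with the explicit expression $-\nabla\Delta\omega+W'''(u)\nabla u\,\omega+W''(u)\nabla\omega-\eta\nabla\omega$ there. The energy inequality \qref{ener-ineq2} follows by taking $\liminf_{\theta\to 0}$ in \qref{ener-ineq1}: the energy terms on the left converge (strong $H^1$ convergence for the gradient and $W$ terms using \qref{grow-1}, $C^\alpha$ convergence for the $W$ term, and lower semicontinuity of the $L^2$ norm of $\omega$ using weak $H^2$ convergence), the dissipation term is handled by weak lower semicontinuity of the convex functional $(M,J)\mapsto\int M|J|^2$ restricted to $B\cap P$ (Ioffe-type lsc, using that $M_\theta(u_\theta)\to M(u)$ pointwise), and the right-hand side is $\theta$-independent.

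Finally, for part (v) — nonnegativity when $u_0>0$ — I would use the crucial estimate \qref{bnd-neg1}: since the right-hand side $C(\theta^2+\theta+\theta^{1/2})\to 0$ as $\theta\to 0$, we get $\operatorname{ess\,sup}_{0\le t\le T}\|(u_\theta)_-+\theta\|_{L^2(\Omega)}^2\to 0$, hence $(u_\theta)_-\to 0$ in $L^\infty(0,T;L^2(\Omega))$. Combined with the strong convergence $u_\theta\to u$ in $C([0,T];C^\alpha(\bar\Omega))$ and the continuity of $s\mapsto s_-$, this forces $u_-\equiv 0$, i.e.\ $u\ge 0$ everywhere in $\Omega_T$. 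To see that $u$ is not identically zero, I would use mass conservation: testing \qref{FCH-w2} with $\xi\equiv 1$ gives $\int_\Omega u_\theta(x,t)\,dx=\int_\Omega u_0(x)\,dx$ for all $t$, and this passes to the limit (strong $C([0,T];H^1)$ convergence suffices), so $\int_\Omega u(x,t)\,dx=\int_\Omega u_0>0$, which rules out $u\equiv 0$.

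The main obstacle I expect is part (iv)(b) together with the correct bookkeeping of the exceptional sets $B$ and $P$: one must carefully justify that the weak limit of $M_\theta(u_\theta)J_\theta$ can be written as $M(u)\zeta$ with $\zeta$ agreeing with the classical flux expression wherever $u$ is regular, even though no global higher regularity of $u$ is available and $M(u)$ degenerates exactly on the troublesome set; controlling the third-order term $\nabla\Delta\omega_\theta$ (which is only bounded after multiplication by $\sqrt{M_\theta(u_\theta)}$) and propagating this through the limit is the delicate point. The uniform elliptic estimates that convert the energy bound into an $H^2$ bound on $u_\theta$ — and in particular verifying that the growth conditions \qref{grow-1}--\qref{grow-5} with the stated range of $p$ are exactly what is needed for the nonlinear terms to be subcritical in dimensions $d\le 3$ — are routine but must be done with care.
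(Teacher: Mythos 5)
Your proposal is correct and follows essentially the same route as the paper: uniform-in-$\theta$ bounds inherited from Theorem \ref{thm-main1}, Aubin--Lions compactness, identification of $\zeta$ on the non-degenerate set via the weak limit of the flux, weak lower semicontinuity for \qref{ener-ineq2}, and the estimate \qref{bnd-neg1} for nonnegativity. The one genuine divergence is your final step: the paper concludes that $u$ is not constantly zero simply from $u(\cdot,0)=u_0>0$ and the continuity of $u$ in $\overline{\Omega_T}$, whereas you invoke mass conservation (testing with $\xi\equiv 1$) to get $\int_\Omega u(x,t)\,dx=\int_\Omega u_0\,dx>0$ for every $t$ --- a slightly stronger and equally valid conclusion. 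The only point you should make explicit is why the weak limit $\Xi$ of $M_\theta(u_\theta)J_\theta$ vanishes a.e.\ off $B\cap P$ (so that writing $\Xi=\chi_{B\cap P}M(u)\zeta$ is legitimate); this follows from factoring $M_\theta(u_\theta)J_\theta=\sqrt{M_\theta(u_\theta)}\cdot\bigl(\sqrt{M_\theta(u_\theta)}J_\theta\bigr)$, where the second factor is bounded in $L^2(\Omega_T)$ and the first tends to $0$ in $L^2$ on $\{u\le 0\}$ by dominated convergence.
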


\subsection{Notation}
In this paper, we use $C$ to denote a generic positive constant that may depend on 
$d,T,\Omega,\eta,u_0$ and $C_j(j=1,2,3,4)$ but nothing else, in particular not on $\theta$. 
We also use $C_\theta$ to denote a generic positive constant that may depend 
on $d,T,\Omega,\eta,u_0, C_j(j=1,2,3,4)$ 
and $\theta$.

This paper is organized as follows. 
In Section \ref{pos-mobi} we prove Theorem \ref{thm-main1} using the Galerkin method. 
In Section \ref{dege-mobi} we prove Theorem \ref{thm-main2} that establishes the existence of a weak solution 
to the equation \qref{fch-eqn1}-\qref{fch-eqn3} with degenerate mobility.

\section{Weak solution for the positive mobility case} \label{pos-mobi}
In this section we prove Theorem \ref{thm-main1}. 
The proof for the existence of a weak solution $u_\theta$ can be found in Section 3 
in \cite{dlp:weakFCH}, which is 
based on Galerkin approximations. 
We just sketch the idea of that proof, as well as state the convergences and estimates that are 
necessary for later parts. 
The main purpose of this section is to prove the energy inequality \qref{ener-ineq1}, and the estimate 
\qref{bnd-neg1} 
when the initial data $u_0(x)$ is positive in $\Omega$.

\subsection{Galerkin approximation and Weak solution}
Let $\{\phi_j:j=1,2,...\}$ be the normalized eigenfunctions, in the sense that $\|\phi_j\|_{L^2(\Omega)}=1$, of the 
eigenvalue problem
\begin{align*}
	-\Delta u&=\lambda u \quad \mbox{in} \; \Omega
\end{align*}
subject to periodic boundary condition on $\partial\Omega$. 
The eigenfunctions $\phi_j$ are orthogonal in the $H^2(\Omega)$ and $L^2(\Omega)$ scalar product. Without loss of 
generality, we assume that $\lambda_1=0$, hence $\phi_1\equiv(2\pi)^{-d/2}$.

We consider the Galerkin approximation for the equation \qref{fch-eqn1}-\qref{fch-eqn3}:
\begin{align}
	u^N(x,t)&=\sum_{j=1}^{N}c^N_j(t)\phi_j(x),\quad \mu^N(x,t)=\sum_{j=1}^{N}d^N_j(t)\phi_j(x), \nn	\\	
	\int_\Omega\partial_tu^N\phi_jdx&=-\int_\Omega M_\theta(u^N)\nabla\mu^N\cdot\nabla\phi_jdx \label{ode1}\\	
	\int_\Omega\mu^N\phi_jdx&=\int_\Omega(-\omega^N\Delta\phi_j+W''(u^N)\omega^N\phi_j
	-\eta\omega^N\phi_j)dx \label{ode2}\\	
	u^N(x,0)&=\sum_{j=1}^{N}\left(\int_\Omega u_0\phi_jdx\right)\phi_j(x) \label{ode3}
\end{align}
where $\omega^N=-\Delta u^N+W'(u^N)$. 
This gives an initial value problem for a system of ordinary differential equations for $c^N_1,...,c^N_N$:
\begin{align}
	\partial_tc_j^N=&-\sum_{k=1}^{N}d_k^N\int_\Omega M_\theta\left(\sum_{i=1}^{N}c^N_i\phi_i\right) 
	\nabla\phi_k\cdot\nabla\phi_jdx, \label{ode4} \\	
	d_j^N=&(\lambda_j^2+\eta\lambda_j)c_j^N + \int_\Omega W''
	\left(\sum_{k=1}^{N}c^N_k\phi_k\right)W'\left(\sum_{k=1}^{N}c^N_k\phi_k\right)\phi_jdx   \nonumber \\
	&-(\lambda_j+\eta)\int_\Omega W'\left(\sum_{k=1}^{N}c^N_k\phi_k\right)\phi_jdx \nn\\
	&- \sum_{k=1}^{N}\lambda_kc_k^N\int_\Omega W''\left(\sum_{i=1}^{N}c^N_i\phi_i\right)\phi_k\phi_jdx, 
	\label{ode5}\\	
	c_j^N(0)=&\int_\Omega u_0\phi_jdx. \label{ode6}
\end{align}
Since the right hand side of \qref{ode4} depends continuously on $c^N_1,...,c^N_N$, the initial value 
problem \qref{ode4}-\qref{ode6} has a local solution. 
From the Subsections 2.2 and 3.1 in \cite{dlp:weakFCH}, we have the following estimates for $u^N$ and $\mu^N$.

\begin{lemma}\label{bnd-lem}
	Let $u^N$ be a solution of the system \qref{ode1}-\qref{ode3}, we have
	\begin{align}
		\label{bnd-uN-H2}\|u^N\|_{L^\infty(0,T;H^2(\Omega))}&\leq C, \\
		\label{bnd-M-muN}\int_0^T\int_\Omega M_\theta(u^N(x,\tau))|\nabla\mu^N(x,\tau)|^2dxd\tau&\leq C, \\
		\label{bnd-wN-L2}\|\omega^N\|_{L^\infty(0,T;L^2(\Omega))}&\leq C, \\
		\label{bnd-M-uN}\|M_\theta(u^N)\|_{L^\infty(0,T;L^\infty(\Omega))} &\leq C, \\
		\|W'(u^N)\|_{L^\infty(0,T;L^\infty(\Omega))} &\leq C, \\
		\label{bnd-W''-Linf}\|W''(u^N)\|_{L^\infty(0,T;L^\infty(\Omega))} &\leq C, \\
		\label{bnd-W'''-Linf}\|W'''(u^N)\|_{L^\infty(0,T;L^\infty(\Omega))} &\leq C, \\
		\label{bnd-uN_t}\|\partial_tu^N\|_{L^2(0,T;(H^2(\Omega))')}&\leq C, \\
		\label{bnd-uN-H5}\|u^N\|_{L^2(0,T;H^5(\Omega))}&\leq C_\theta.
	\end{align}
\end{lemma}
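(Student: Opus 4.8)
The plan is to derive every estimate from the exact energy dissipation identity satisfied by the Galerkin system, and then use the strict positivity of $M_\theta$ to bootstrap regularity for fixed $\theta$. Since $W\in C^4$, the right-hand sides of \qref{ode4}--\qref{ode6} are smooth in $(c_1^N,\dots,c_N^N)$, so the ODE system has a $C^1$ local solution; thus $u^N$ and $\mu^N$ are finite linear combinations of eigenfunctions with $C^1$ coefficients, and the space-time manipulations below are classical. Testing \qref{ode1} with $d_j^N$ and summing over $j$ gives $\int_\Om\pa_tu^N\mu^N\,dx=-\int_\Om M_\theta(u^N)|\na\mu^N|^2\,dx$, while differentiating $\cF(u^N)$, expanding $\pa_tu^N$ in the eigenbasis, and invoking \qref{ode2}, the identity $\om^N=-\De u^N+W'(u^N)$, and the periodic boundary conditions gives $\frac{d}{dt}\cF(u^N)=\int_\Om\pa_tu^N\mu^N\,dx$. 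Combining and integrating in time yields the exact identity $\cF(u^N(t))+\int_0^t\!\!\int_\Om M_\theta(u^N)|\na\mu^N|^2\,dx\,d\tau=\cF(u^N(0))$. Since $u^N(0)$ is the $L^2$-projection of $u_0$ onto the span of the first $N$ eigenfunctions, $u^N(0)\to u_0$ in $H^2(\Om)$, so $\cF(u^N(0))\le C$ uniformly in $N$; also, testing \qref{ode1} with the constant $\phi_1$ shows $\int_\Om u^N\,dx$ is conserved.

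The crux is that $\cF$ is sign-indefinite because of the $-\tfrac{\eta}{2}|\na u|^2$ term, so coercivity must be recovered by hand. Pairing $\om^N=-\De u^N+W'(u^N)$ with $u^N$ and integrating by parts gives $\|\na u^N\|_{L^2}^2+\int_\Om W'(u^N)u^N\,dx=\int_\Om\om^Nu^N\,dx$; using \qref{grow-5} and H\"older's inequality on the bounded domain this becomes $\|\na u^N\|_{L^2}^2+\|u^N\|_{L^{2p}}^{2p}\le C\|\om^N\|_{L^2}\|u^N\|_{L^{2p}}+C$. Because $p>1$, the exponent conjugate to $2p$ is $2p/(2p-1)<2$, so Young's inequality absorbs the right-hand side into $\e\|\om^N\|_{L^2}^2+C_\e$; feeding the resulting bound $\|\na u^N\|_{L^2}^2+\|u^N\|_{L^{2p}}^{2p}\le\e\|\om^N\|_{L^2}^2+C$ into the upper bound $W(z)\le C_3|z|^{2p}+C_4$ and choosing $\e$ small yields $\cF(u^N)\ge\tfrac14\|\om^N\|_{L^2}^2-C$. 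Combined with the energy identity (whose right side is uniformly bounded and whose dissipation term is nonnegative) this gives \qref{bnd-wN-L2}, and hence an $N$-uniform bound on $\|\na u^N\|_{L^\infty(0,T;L^2)}$ and $\|u^N\|_{L^\infty(0,T;L^{2p})}$; writing $\De u^N=W'(u^N)-\om^N$ and estimating $\|W'(u^N)\|_{L^2}$ through \qref{grow-2} and the embedding $H^1(\Om)\hookrightarrow L^{2(2p-1)}(\Om)$ (valid for $d\le3$ under the stated restriction on $p$) gives \qref{bnd-uN-H2}. Since $H^2(\Om)\hookrightarrow L^\infty(\Om)$ for $d\le3$, $u^N$ is bounded in $L^\infty(\Om_T)$ uniformly in $N$, so $0<\theta<1$ and $W\in C^4$ immediately give \qref{bnd-M-uN}--\qref{bnd-W'''-Linf}, while \qref{bnd-M-muN} follows from the energy identity and the lower bound on $\cF$. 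These $N$-uniform bounds preclude finite-time blowup of the ODE solution, so $u^N$ is defined on all of $[0,T]$.

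For \qref{bnd-uN_t}, given $\xi\in H^2(\Om)$ let $P_N\xi$ be the truncation of its eigenfunction expansion; since the $\phi_j$ are eigenfunctions of $-\De$, $P_N$ is bounded on $H^1(\Om)$, and \qref{ode1} gives $\langle\pa_tu^N,\xi\rangle=-\int_\Om M_\theta(u^N)\na\mu^N\cdot\na P_N\xi\,dx$ (using that $\pa_tu^N$ is orthogonal to $\phi_j$ for $j>N$). The mobility-weighted Cauchy--Schwarz inequality and \qref{bnd-M-uN} bound the right side by $C\bigl(\int_\Om M_\theta(u^N)|\na\mu^N|^2\,dx\bigr)^{1/2}\|\xi\|_{H^2}$, so squaring, integrating in time, and using \qref{bnd-M-muN} gives \qref{bnd-uN_t}.

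Finally, \qref{bnd-uN-H5} is where $\theta$ enters quantitatively: $M_\theta\ge\theta$ upgrades \qref{bnd-M-muN} to $\|\na\mu^N\|_{L^2(\Om_T)}\le C_\theta$, and testing \qref{ode2} with $\phi_1$ bounds the mean of $\mu^N$, so $\|\mu^N\|_{L^2(0,T;H^1(\Om))}\le C_\theta$. Reading \qref{ode2} as $\mu^N=P_N\bigl(-\De\om^N+(W''(u^N)-\eta)\om^N\bigr)$ and using that $\De^2u^N$ lies in the eigenspace and is therefore fixed by $P_N$, one solves $\De^2u^N=\mu^N+P_N\bigl(\De(W'(u^N))-(W''(u^N)-\eta)\om^N\bigr)$; the lower-order terms are controlled in $L^2(\Om_T)$ via \qref{bnd-uN-H2} and $W\in C^4$, so periodic elliptic regularity gives $\|u^N\|_{L^2(0,T;H^4(\Om))}\le C_\theta$, and one more application of $\na$ --- now legitimate since $u^N$ is in $H^4$ in space --- together with the $H^1$-control of $\mu^N$ bootstraps this to \qref{bnd-uN-H5}. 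I expect the coercivity step of the second paragraph --- recovering control of $\|\na u^N\|_{L^2}$ and $\|u^N\|_{L^{2p}}$ from the sign-indefinite energy using only the structural inequality \qref{grow-5} and $p>1$ --- to be the main obstacle; the rest is routine bookkeeping with Sobolev embeddings valid in $d\le3$.
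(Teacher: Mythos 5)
Your proof is correct and follows essentially the same route as the source the paper itself defers to for this lemma (Subsections 2.2 and 3.1 of \cite{dlp:weakFCH}): the energy dissipation identity for the Galerkin system, recovery of coercivity of the sign-indefinite functional by pairing $\omega^N$ with $u^N$ and invoking \qref{grow-5} with $p>1$, the Sobolev embeddings valid for $d\le 3$ under the stated restriction on $p$, and the $\theta$-dependent elliptic bootstrap to $H^5$ using $M_\theta\ge\theta$. No gaps; nothing further is needed.
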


The estimates in Lemma \ref{bnd-lem} give the uniform bound for $c^N_1,...,c^N_N$. 
Therefore a global solution for the initial value problem \qref{ode4}-\qref{ode6} exists. 
With the specific form \qref{M-theta} of the positive mobility $M_\theta(u)$, we obtain the following bound for $\nabla\mu^N$.
\begin{lemma}
	Let $u^N$ be a solution of the system \qref{ode1}-\qref{ode3}, we have
	\begin{align}\label{bnd-grad-muN}
	\|\nabla\mu^N\|_{L^2(\Omega_T)}\leq \frac{C}{\theta^{1/2}}.
	\end{align}
\end{lemma}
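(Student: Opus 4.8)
The plan is to exploit the uniform positive lower bound that is built into the truncated mobility $M_\theta$. From the definition \qref{M-theta}, for every $u\in\R$ one has $M_\theta(u)\ge\theta$: indeed $M_\theta(u)=\theta$ when $u\le\theta$, while $M_\theta(u)=u>\theta$ when $u>\theta$. Applying this inequality pointwise with $u=u^N(x,\tau)$ and integrating over $\Omega_T$ gives
\begin{align*}
\theta\int_0^T\!\!\int_\Omega|\nabla\mu^N(x,\tau)|^2\,dx\,d\tau
\le\int_0^T\!\!\int_\Omega M_\theta(u^N(x,\tau))\,|\nabla\mu^N(x,\tau)|^2\,dx\,d\tau .
\end{align*}
Now I would simply invoke the estimate \qref{bnd-M-muN} from Lemma \ref{bnd-lem}, which bounds the right-hand side by a constant $C$ that is independent of $N$ and of $\theta$. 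Dividing by $\theta$ yields $\|\nabla\mu^N\|_{L^2(\Omega_T)}^2\le C/\theta$, and taking square roots gives \qref{bnd-grad-muN}.

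There is essentially no obstacle here; the only point requiring care is that the constant in \qref{bnd-M-muN} is the $\theta$-independent constant $C$ (rather than $C_\theta$), which is precisely what Lemma \ref{bnd-lem} asserts, and that the argument uses only the lower bound $M_\theta\ge\theta$ and not the specific piecewise form of $M_\theta$ beyond that. The cost of dividing by $\theta$ is the factor $\theta^{-1/2}$, so this control of $\nabla\mu^N$ degenerates as $\theta\to0$; it is exactly this loss that later necessitates the separate $\theta$-uniform bounds, in particular the negative-part estimate \qref{bnd-neg1} of Theorem \ref{thm-main1}, in order to pass to the limit with the genuinely degenerate mobility \qref{M(u)}.
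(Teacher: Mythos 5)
Your argument is exactly the paper's: it uses the pointwise lower bound $M_\theta(u)\ge\theta$ from \qref{M-theta} together with the $\theta$-independent estimate \qref{bnd-M-muN}, then divides by $\theta$ and takes square roots. The proof is correct and complete as written.
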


\begin{proof}
By the definition of $M_\theta(u)$ in \qref{M-theta}, $M_\theta(u)\geq\theta$. Combining with \qref{bnd-M-muN} we 
obtain the estimate \qref{bnd-grad-muN}. 
\end{proof}

By the Aubin-Lions lemma and \qref{bnd-uN-H5} and \qref{bnd-uN_t}, there exist a subsequence of 
$\{u^N\}$ (not relabeled) and a function 
$u_\theta\in L^\infty(0,T;H^5(\Omega))\cap C([0,T];H^l(\Omega))\cap C([0,T];C^{3,\alpha}(\bar{\Omega}))$ 
such that as $N\rightarrow\infty$,
\begin{align}
	u^N&\rightharpoonup u_\theta \quad \mbox{weakly-* in}\; L^\infty(0,T;H^5(\Omega)), \\
	u^N&\rightarrow u_\theta \quad \mbox{strongly\; in}\; C([0,T];H^l(\Omega)), \\
	\label{u-conv2-C}u^N&\rightarrow u_\theta \quad \mbox{strongly\; in}\; C([0,T];C^{3,\alpha}(\bar{\Omega})) \; 
	\mbox{and \; a.e. \; in} \; \Omega_T, \\
	\partial_tu^N&\rightharpoonup \partial_tu_\theta \quad \mbox{weakly\; in}\; L^2(0,T;(H^2(\Omega))'),
\end{align}
where $1\leq l \leq 4$ and $0<\alpha<1/2$. 
Then from the Subsection 3.2 in \cite{dlp:weakFCH}, $u_\theta$ is a weak solution to the the 
FCH equation \qref{fch-eqn1}-\qref{fch-eqn3} in the following weak sense:
\begin{align}
	&\int_0^T\left\langle \partial_tu_\theta,\xi\right\rangle_{(H^2(\Omega)',H^2(\Omega))}dt 
	=-\int_0^T\int_\Omega M_\theta(u_\theta)\biggl(-\nabla\Delta\omega_\theta \nn\\
	&\quad\quad+W'''(u_\theta)
	\nabla u_\theta\omega_\theta+W''(u_\theta)\nabla\omega_\theta-\eta\nabla\omega_\theta\biggr)\cdot\nabla\xi dxdt
	\nn
\end{align}
for all $\xi\in L^2(0,T;H^2(\Omega))$, where $\omega_\theta=-\Delta u_\theta+W'(u_\theta)$.

\subsection{Energy inequality}
Fix any $t\geq 0$. 
Since
\begin{align*}
	\frac{d}{d\tau}\cF(u^N(x,\tau))=-\int_\Omega M_\theta(u^N(x,\tau))|\nabla\mu^N(x,\tau)|^2dx,
\end{align*}
integrating in time over $(0,t)$ gives the following energy identity
\begin{align}\label{ener-id}
	&\int_\Omega\frac{1}{2}|\omega^N(x,t)|^2-\eta\left(\frac{1}{2}|\nabla u^N(x,t)|^2+W(u^N(x,t))\right)dx \nonumber\\
	&+\int_{0}^{t}\int_\Omega M_\theta(u^N(x,\tau))|\nabla\mu^N(x,\tau)|^2dxd\tau  \nonumber\\
	&=\int_\Omega\frac{1}{2}|\omega^N(x,0)|^2-\eta\left(\frac{1}{2}|\nabla u^N(x,0)|^2+W(u^N(x,0))\right)dx.
\end{align}

Since $H^1(\Omega)\subset\subset L^2(\Omega)\hookrightarrow (H^3(\Omega))'$, by Aubin-Lions lemma,
\begin{align*}
	\{f\in 	L^\infty(0,T;H^1(\Omega)):\partial_tf\in L^2(0,T;(H^3(\Omega))')\}\subset\subset C(0,T;L^2(\Omega)).
\end{align*}
By \qref{bnd-uN-H2} and \qref{bnd-uN_t}, we have 
\[ \nabla u^N\in L^\infty(0,T;H^1(\Omega)) \quad\mbox{ and }\quad\partial_t\nabla u^N\in L^2(0,T;(H^3(\Omega))').\] 
Then by \qref{bnd-uN-H2} again, there exists a subsequence of $\{u^N\}$ (not relabeled) such that 
$\nabla u^N\rightarrow\nabla u_\theta$ strongly in $C([0,T];L^2(\Omega))$. Hence
\begin{align}\label{grad-uN(t)-conv}
	\nabla u^N(\cdot,t)\rightarrow\nabla u_\theta(\cdot,t)\; \mbox{strongly\; in}\;L^2(\Omega).
\end{align}
By \qref{u-conv2-C} and \qref{grow-1}, we have $W(u^N) \rightarrow W(u_\theta)$ in $C([0,T];L^2(\Omega))$, hence
\begin{align}\label{W-uN(t)-conv}
	W(u^N(\cdot,t)) \rightarrow W(u_\theta(\cdot,t)) \quad \mbox{strongly \; in} \; L^1(\Omega).
\end{align}
Since $H^1(\Omega)\subset\subset L^2(\Omega)\hookrightarrow (H^4(\Omega))'$, by Aubin-Lions lemma,
\begin{align*}
	\{f\in L^2(0,T;H^1(\Omega)):\partial_tf\in L^2(0,T;(H^4(\Omega))')\}\subset\subset L^2(0,T;L^2(\Omega)).
\end{align*}
By \qref{bnd-uN-H5} and \qref{bnd-uN_t}, we have 
\[ \Delta u^N\in L^2(0,T;H^1(\Omega)) \quad\mbox{ and }\quad\partial_t\Delta u^N\in L^2(0,T;(H^4(\Omega))').\]
Then by \qref{bnd-uN-H5} again, there exists a further subsequence of $\{u^N\}$ (not relabeled) such that 
$\Delta u^N\rightarrow\Delta u_\theta$ strongly in $L^2(0,T;L^2(\Omega))$. So
\begin{align}\label{conv-lapl-uN(t)}
	\Delta u^N(x,t)\rightarrow\Delta u_\theta(x,t)\; \mbox{for\;a.e.}\;x\in\Omega.
\end{align}	
By \qref{u-conv2-C} and \qref{grow-2}, we have $W'(u^N(\cdot,t))\rightarrow W'(u_\theta(\cdot,t))$ strongly in 
$L^q(\Omega)$ ($1\leq q<\infty$), so there exists a further subsequence of $\{u^N\}$ (not relabeled) such that
\begin{align}\label{W'-uN(t)-conv}
	W'(u^N(x,t))\rightarrow W'(u_\theta(x,t))\;\mbox{for\;a.e.}\;x\in\Omega.
\end{align}	
So 
\[\omega^N(x,t)=-\Delta u^N(x,t)+W'(u^N(x,t))\rightarrow\omega_\theta(x,t)=-\Delta u_\theta(x,t)+W'(u_\theta(x,t)) \]
 for a.e.  $x\in\Omega$. 
By Fatou's lemma,
\begin{align}\label{Fatou}
	\int_\Omega|\omega_\theta(x,t)|^2dx\leq\liminf_{N\rightarrow\infty}\int_\Omega|\omega^N(x,t)|^2dx.
\end{align}
From \cite{dlp:weakFCH} we have  $\sqrt{M_\theta(u^N)}\nabla\mu^N\rightharpoonup\chi_\theta
=\sqrt{M_\theta(u_\theta)}\nabla\mu_\theta$ weakly in $L^2(0,t;L^2(\Omega))$, then
\begin{align}\label{ineq-M-gradmu}
	&\int_0^{t}\int_\Omega M_\theta(u_\theta(x,\tau))|\nabla\mu_\theta(x,\tau)|^2dxd\tau \nn\\
	\leq & \liminf_{N\rightarrow\infty}
	\int_0^{t}\int_\Omega M_\theta(u^N(x,\tau))|\nabla\mu^N(x,\tau)|^2dxd\tau.
\end{align}
Since \qref{grad-uN(t)-conv} and \qref{W-uN(t)-conv} are true for arbitrary $t\geq 0$, we let $t=0$ in \qref{grad-uN(t)-conv} 
and \qref{W-uN(t)-conv} to get
\begin{align}
	\label{grad-uN(t0)-conv} \nabla u^N(\cdot,0)&\rightarrow\nabla u_0\; \mbox{strongly\;in}\;L^2(\Omega),  \\
	\label{W-uN(0)-conv} W(u^N(\cdot,0))&\rightarrow W(u_0)\; \mbox{strongly\;in}\;L^1(\Omega).
\end{align}
By \qref{u-conv2-C} and \qref{grow-2}, we have $W'(u^N)\rightarrow W'(u_\theta)$ strongly in $C([0,T];L^2(\Omega))$, so
\begin{align}\label{W'-uN(0)-conv}
	W'(u^N(\cdot,0))\rightarrow W'(u_\theta(\cdot,0))=W'(u_0)\;\mbox{strongly\;in}\;L^2(\Omega).
\end{align}
By \qref{ode3}, $u^N(\cdot,0)\rightarrow u_0$ strongly in $H^2(\Omega)$, then
\begin{align}\label{lapl-uN(0)-conv}
	\Delta u^N(\cdot,0)\rightarrow\Delta u_0\;\mbox{strongly\;in}\;L^2(\Omega).
\end{align}
Combining \qref{W'-uN(0)-conv} and \qref{lapl-uN(0)-conv} we have 
\[\omega^N(\cdot,0)=-\Delta u^N(\cdot,0)+W'(u^N(\cdot,0))\rightarrow -\Delta u_0+W'(u_0)\] 
strongly in $L^2(\Omega)$, so
\begin{align}\label{int-w(0)}
	\lim_{N\rightarrow\infty}\int_\Omega|\omega^N(x,0)|^2dx=\int_\Omega|-\Delta u_0+W'(u_0)|^2dx.
\end{align}
So we can find a subsequence of $\{u^N\}$ (not relabeled) that satisfies \qref{ener-id}-\qref{int-w(0)}. Then 
by taking the limit as $N\rightarrow\infty$ in \qref{ener-id}, we get the energy inequality \qref{ener-ineq1}.

\subsection{Positive initial data}
In this subsection, we assume that the initial data $u_0(x)>0$ for all $x\in\Omega$. 
Let $\Phi:(0,\infty)\rightarrow [0,\infty)$ be defined by
\begin{align*}
	\Phi''(u)=\frac{1}{M(u)}, \quad \Phi(1)=\Phi'(1)=0,
\end{align*}
and $\Phi_\theta$ be defined by
\begin{align*}
	\Phi_\theta''(u)=\frac{1}{M_\theta(u)}, \quad \Phi_\theta(1)=\Phi_\theta'(1)=0, \quad \mbox{and} \quad \Phi_\theta\in C^2(\R),
\end{align*}
for $0<\theta<1$. Using the definition of $M(u)$ and $M_\theta(u)$ in \qref{M(u)} and \qref{M-theta}, by direct 
calculation, we have
\begin{align}\label{Phi(u)}
	\Phi(u)=u\ln u-u+1 \quad\mbox{for}\; u>0,
\end{align}
and
\begin{align}\label{Phi-theta}
	\Phi_\theta(u)=\left\{\begin{array}{l} u\ln u-u+1, \qquad u>\theta,  \\
	\frac{1}{2\theta}u^2+(\ln\theta-1)u+1-\frac{\theta}{2}, \; u\leq\theta. \end{array} \right.
\end{align}
We notice that  $\Phi_\theta(u)\geq 0$ for all $u\in\R$, $0\leq\Phi_\theta(u)\leq\Phi(u)$ for all $u>0$ and $\Phi_\theta(u)=\Phi(u)$ for all $u\geq\theta$.

\textbf{Claim 1.} \textit{For any $t\in [0,T]$,}
\begin{align}\label{Phi-theta-eqn}
	\int_\Omega\Phi_\theta(u_\theta(x,t))dx - \int_\Omega\Phi_\theta(u_0(x))dx=-\int_0^t\int_\Omega \nabla\mu_\theta\cdot\nabla u_\theta dxd\tau.
\end{align}

To prove this, for any $\epsilon>0$, let $\Phi_{\theta,\epsilon}$ be the mollification of $\Phi_\theta$. 
Since $\Phi_\theta \in C^2(\R)$, then $\Phi_{\theta,\epsilon} \to \Phi_\theta, \Phi'_{\theta,\epsilon} \to \Phi'_\theta$ and $\Phi''_{\theta,\epsilon} \to \Phi''_\theta$ on compact subsets of $\R$ as $\epsilon \to 0$. 
First, we prove that
\begin{align}\label{Phi-theta-eps-eqn}
	&\int_\Omega\Phi_{\theta,\epsilon}(u_\theta(x,t))dx - \int_\Omega\Phi_{\theta,\epsilon}(u_0(x))dx\nn\\
	=&-\int_0^t\int_\Omega M_\theta(u_\theta)\Phi_{\theta,\epsilon}''(u_\theta)\nabla\mu_\theta\cdot\nabla u_\theta dxd\tau.
\end{align}
For any $h>0$, define 
\begin{align}
	u_{\theta,h}(x,t):=\frac{1}{h}\int_{t-h}^ tu_\theta(x,\tau)d\tau,
\end{align}
where we set $u_\theta(x,t)=u_0(x)$ when $t\leq 0$. 
Since $H^3(\Omega)\subset\subset H^2(\Omega)\hookrightarrow (H^3(\Omega))'$, by the Aubin-Lions lemma,
\begin{align*}
	\{f\in 	L^2(0,T;H^3(\Omega)):\partial_tf\in L^2(0,T;(H^3(\Omega))')\}\subset\subset L^2(0,T;H^2(\Omega)).
\end{align*}
Since $u_\theta\in L^\infty(0,T;H^5(\Omega))$ and $\Phi_{\theta,\epsilon}^{(k)}(k=1,2,3,4)$ are bounded, we have
\begin{align*}
	\sup_{h>0}\|\Phi_{\theta,\epsilon}'(u_{\theta,h})\|_{L^2(0,T;H^3(\Omega))} \leq C_\theta \quad \mbox{and} 
	\quad  \partial_t\Phi_{\theta,\epsilon}'(u_{\theta,h})\in L^2(0,T;(H^3(\Omega))')
\end{align*}
for any $h>0$. Hence there exists a subsequence of $\{\Phi_{\theta,\epsilon}'(u_{\theta,h})\}_{h>0}$ (not relabeled) such that
\begin{align} \label{conv-Phi'-u-theta-h}
	\Phi_{\theta,\epsilon}'(u_{\theta,h}) \rightarrow \Phi_{\theta,\epsilon}'(u_\theta) \quad \mbox{strongly \; in} \; L^2(0,T;H^2(\Omega)) \; 
	\mbox{as} \; h\rightarrow 0.
\end{align}
Furthermore, we can show that
\begin{align} \label{conv-dt/du-theta-h}
	\partial_tu_{\theta,h} \rightarrow \partial_tu_\theta \quad \mbox{strongly \; in} \; 
	L^2(0,T;(H^2(\Omega))') \; \mbox{as} \; h\rightarrow 0.
\end{align}
Indeed, let $\textup{\textbf{J}}_\theta:=M_\theta(u_\theta)\nabla\mu_\theta$, then for any $\xi\in L^2(0,T;H^2(\Omega))$,
\begin{align*}
	&\left| \left\langle \partial_tu_{\theta,h} - \partial_tu_\theta,\xi\right\rangle_{(L^2(0,T;(H^2(\Omega))'),
	L^2(0,T;H^2(\Omega))} \right| \nn\\
	=& \frac{1}{h}\left| \int_0^T\left\langle \int_{t-h}^t(\partial_tu_\theta(\tau) - \partial_tu_\theta(t))d\tau,
	\xi\right\rangle_{((H^2(\Omega))',H^2(\Omega))} dt \right| \\ 
	=& \frac{1}{h}\left| \int_0^T\left\langle \int_{-h}^0(\partial_tu_\theta(t+s) - \partial_tu_\theta(t))ds,
	\xi\right\rangle_{((H^2(\Omega))',H^2(\Omega))} dt \right| \\  
	\leq& \frac{1}{h} \int_{-h}^0 \left| \int_0^T \int_\Omega \nabla\xi\cdot (\textup{\textbf{J}}_\theta(t+s)
	-\textup{\textbf{J}}_\theta(t))dxdt\right| ds \\
	\leq& \|\xi\|_{L^2(0,T;H^2(\Omega))}\sup_{-h\leq s \leq 0}\|\textup{\textbf{J}}_\theta(\cdot+s)
	-\textup{\textbf{J}}_\theta(\cdot)\|_{L^2(0,T;H^2(\Omega))}.
\end{align*}
Hence,
\begin{align}
	\|\partial_tu_{\theta,h} - \partial_tu_\theta\|_{L^2(0,T;(H^2(\Omega))')} \leq \|\textup{\textbf{J}}_\theta(\cdot+s)
	-\textup{\textbf{J}}_\theta(\cdot)\|_{L^2(0,T;H^2(\Omega))} \rightarrow 0 	\nn
\end{align}
as $h\to 0$. 
So we obtain \qref{conv-dt/du-theta-h}.

Since $\Phi_{\theta,\epsilon}'(u_{\theta,h})$ and $\partial_tu_{\theta,h}$ are both in $L^2(\Omega_T)$, we 
have for a.e. $t\in[0,T]$,
\begin{align}
	&\int_0^t\left\langle \partial_tu_{\theta,h},\Phi_{\theta,\epsilon}'(u_{\theta,h})
	\right\rangle_{((H^2(\Omega))',H^2(\Omega))}d\tau \nn\\
	=& \int_0^t \int_\Omega \Phi_{\theta,\epsilon}'(u_{\theta,h})\partial_tu_{\theta,h} dxd\tau \nonumber\\
	=& \int_\Omega \int_0^t \partial_t \Phi_{\theta,\epsilon}(u_{\theta,h}(\tau,x)) d\tau dx  \nonumber\\
	=&\int_\Omega\Phi_{\theta,\epsilon}(u_{\theta,h}(x,t))dx - \int_\Omega \Phi_{\theta,\epsilon}(u_0(x))dx. \nn
\end{align}
Passing to the limit as $h\rightarrow 0$ in this equation, combining with \qref{conv-Phi'-u-theta-h} and 
\qref{conv-dt/du-theta-h}, we get
\begin{align}\label{Phi-theta-eps-lhs}
	&\int_0^t\left\langle \partial_tu_\theta,\Phi_{\theta,\epsilon}'(u_\theta)\right\rangle_{((H^2(\Omega))',H^2(\Omega))}d\tau 
	\nn\\
	=& \int_\Omega \Phi_{\theta,\epsilon}(u_\theta(x,t))dx - \int_\Omega\Phi_{\theta,\epsilon}(u_0(x))dx.
\end{align}
Since $\Phi_{\theta,\epsilon}''$ and $\Phi_{\theta,\epsilon}'''$ are bounded, $\Phi_{\theta,\epsilon}'(u_\theta)\in L^2(0,T;H^2(\Omega))$. 
So $\Phi_{\theta,\epsilon}'(u_\theta)$ is an admissible test function for the equation \qref{FCH-w2}. Hence for 
any $t\in [0,T]$,
\begin{align}\label{Phi-theta-eps-rhs}
	&\int_0^t\left\langle \partial_tu_\theta,\Phi_{\theta,\epsilon}'(u_\theta)\right\rangle_{((H^2(\Omega))',H^2(\Omega))}d\tau 
	\nn\\
	=&-\int_0^t\int_\Omega 		M_\theta(u_\theta)\nabla\mu_\theta\cdot\nabla(\Phi_{\theta,\epsilon}'(u_\theta)) dxd\tau 
	\nonumber\\
	=&-\int_0^t\int_\Omega M_\theta(u_\theta)\Phi_{\theta,\epsilon}''(u_\theta)\nabla\mu_\theta\cdot\nabla u_\theta dxd\tau.
\end{align}
Combining \qref{Phi-theta-eps-lhs} and \qref{Phi-theta-eps-rhs} we get \qref{Phi-theta-eps-eqn}.

For each $t \in [0,T]$, since $u_\theta \in C([0,T];C^{3,\alpha}(\bar{\Omega}))$, $u_\theta(\bar{\Omega},t):=\{u_\theta(x,t): x\in\bar{\Omega}\}$ is a compact subset of $\R$, 
so $\Phi_{\theta,\epsilon} \to \Phi_\theta$ and $\Phi''_{\theta,\epsilon} \to \Phi''_\theta$ uniformly on $u_\theta(\bar{\Omega},t)$ as $\epsilon \to 0$, 
hence $\Phi_{\theta,\epsilon}(u_\theta) \to \Phi_\theta(u_\theta)$ and $\Phi''_{\theta,\epsilon}(u_\theta) \to \Phi''_\theta(u_\theta)$ uniformly on $\bar{\Omega}$ as $\epsilon \to 0$. 
Also, $u_\theta(\overline{\Omega_T}):=\{u_\theta(x,\tau): x\in\bar{\Omega},\tau\in[0,T]\}$ is a compact subset of $\R$, 
so $\Phi''_{\theta,\epsilon} \to \Phi''_\theta$ uniformly on $u_\theta(\overline{\Omega_T})$ as $\epsilon \to 0$, 
hence $\Phi''_{\theta,\epsilon}(u_\theta) \to \Phi''_\theta(u_\theta)$ uniformly on $\overline{\Omega_T}$ as $\epsilon \to 0$. 
Using Lemma \ref{bnd-lem}, we have
\begin{align} \nn
	\|M_\theta(u_\theta)\|_{L^\infty(\Omega_T)} <\infty \quad \mathrm{and} \quad 
	\|\sqrt{M_\theta(u_\theta)}\nabla\mu_\theta\|_{L^2(\Omega_T)}<\infty.
\end{align}
Since $u_\theta \in C([0,T];C^{3,\alpha}(\bar{\Omega}))$, we have $\|\nabla u_\theta\|_{L^\infty(\Omega_T)} <\infty$. 
Thus
\begin{align}
	&\left| \int_0^t\int_\Omega M_\theta(u_\theta)\nabla\mu_\theta\cdot\nabla u_\theta
	\left( \Phi_{\theta,\epsilon}''(u_\theta)-\Phi_\theta''(u_\theta)\right)  dxd\tau\right|  \nonumber \\
	\leq &\|M_\theta(u_\theta)\|_{L^\infty(\Omega_T)}^{1/2}\|\nabla u_\theta\|_{L^\infty(\Omega_T)} \|\sqrt{M_\theta(u_\theta)}\nabla\mu_\theta\|_{L^2(\Omega_T)}\nn\\
	&\cdot \|\Phi_{\theta,\epsilon}''(u_\theta)-\Phi_\theta''(u_\theta)\|_{L^2(\Omega_T)}  \nonumber\\
	\to &0 \quad \mathrm{as} \; \epsilon \to 0. \nn
\end{align}
So passing to limits as $\epsilon \to 0$ on both sides of \qref{Phi-theta-eps-eqn} and using $\Phi''_\theta(u_\theta)=1/M_\theta(u_\theta)$, we get \qref{Phi-theta-eqn}.

\textbf{Claim 2.} \textit{Let $(u_\theta)_-:=\min\{u_\theta,0\}$, then for any $0<\theta<1$,}
\begin{align}\label{bnd-neg2}
	\textup{ess sup}_{0\leq t \leq T}\int_\Omega |(u_\theta)_-+\theta|^2dx\leq C(\theta^2+\theta+\theta^{1/2}).
\end{align}
To prove this, for any $z\leq0$, we rewrite $\Phi_\theta(z)$ as
\begin{align}\label{phi-theta-neg}
	\Phi_\theta(z) =\frac{1}{2\theta}(z+\theta)^2+(\ln\theta-2)z+(1-\theta).
\end{align}
Since $0<\theta<1$, \qref{phi-theta-neg} implies
\begin{align}
	(z+\theta)^2 \leq 2\theta \Phi_\theta(z) \quad \mathrm{for\; all} \;z\leq 0. \nn
\end{align}
Hence for any $t\in[0,T]$, since $\Phi_\theta(z) \geq 0$ for all $z\in\R$, we have
\begin{align}\label{bnd-u-theta-neg}
	&\int_\Omega |(u_\theta(x,t))_-+\theta|^2dx \nn\\
	\leq& 2 \theta\int_\Omega\Phi_\theta(u_\theta(x,t)_-)dx  \nonumber \\
	\leq & 2\theta \left( \int_{\{u_\theta\leq 0\}}\Phi_\theta(u_\theta(x,t))dx+\int_{\{u_\theta>0\}}\Phi_\theta(0)dx\right) 
	\nonumber \\
	\leq& 2\theta \left( \int_\Omega\Phi_\theta(u_\theta(x,t))dx+\int_\Omega\Phi_\theta(0)dx\right)  \nonumber\\
	\leq& 2\theta \left[ \int_\Omega\Phi_\theta(u_\theta(x,t))dx+\left(1-\frac{\theta}{2} \right)|\Omega|\right]. 
\end{align} 
From \qref{bnd-grad-muN} and \qref{bnd-uN-H2}, we have $\|\nabla\mu_\theta\|_{L^2(\Omega_T)} \leq C/\theta^{1/2}$ 
and $\|\nabla u_\theta\|_{L^2(\Omega_T)} \leq C$. Thus by \qref{Phi-theta-eqn} and H\"{o}lder's inequality, we have
\begin{align}\
	&\left|\int_\Omega\Phi_\theta(u_\theta(x,t))dx\right| \nn\\
	\leq& \left|\int_\Omega\Phi_\theta(u_0)dx\right| 
	+ \left|\int_0^t\int_\Omega \nabla\mu_\theta(x,\tau)\cdot\nabla u_\theta(x,\tau) dxd\tau\right|  \nonumber\\
	\leq & \int_\Omega\Phi_\theta(u_0)dx + \|\nabla\mu_\theta\|_{L^2(\Omega_T)}\|\nabla u_\theta\|_{L^2(\Omega_T)} 	
	\nonumber \\
	\leq & \int_\Omega\Phi(u_0)dx + \frac{C}{\theta^{1/2}} \nn
\end{align}
for any $t\in[0,T]$. 
This inequality and \qref{bnd-u-theta-neg} implies \qref{bnd-neg2}. $\square$

\section{Weak solutions for the degenerate mobility case}\label{dege-mobi}
In this section we prove the main theorem that is Theorem \ref{thm-main2}.  
We now consider the FCH equation \qref{fch-eqn1}-\qref{fch-eqn3} with the degenerate mobility $M(u)$ defined by \qref{M(u)}. 
The proof for the existence of a weak solution $u$ and a function $\zeta$ that satisfy \qref{FCH-w3} is similar to that in 
Section 4.4 in \cite{dlp:weakFCH}. 
Again, here we just sketch the idea of that proof, as well as state the convergences and estimates that are necessary for 
later parts.  
In this section we provide a more detailed proof for the relation between $u$ and $\zeta$, as well as prove the energy in 
equality \qref{ener-ineq2}. 
Moreover, we prove the existence of a nonnegative solution that is not constantly zero in $\Omega_T$ when the initial 
data $u_0$ is positive in $\Omega$.

\subsection{Weak solution to the degenerate FCH equation}
Fix $u_0\in H^2(\Omega)$ and a sequence $\{\theta_i\}_{i=1}^\infty \subset (0,1)$ that monotonically 
decreases to 0 as $i\rightarrow\infty$. 
For each $\theta_i$, by Theorem \ref{thm-main1}, there exists a function
\begin{align*}
	u_{\theta_i}\in L^\infty(0,T;H^5(\Omega))\cap C([0,T];H^l(\Omega))\cap C([0,T];C^{3,\alpha}(\bar{\Omega})), 
\end{align*}
whose weak derivative is
\begin{align*}
	\partial_tu_{\theta_i}\in L^2(0,T;(H^2(\Omega))'),
\end{align*}
where $1\leq l \leq 4$ and $0<\alpha<1/2$, such that for any $\xi\in L^2(0,T;H^2(\Omega))$,
\begin{align}
	\int_0^T\left\langle \partial_tu_{\theta_i},\xi\right\rangle_{(H^2(\Omega)',H^2(\Omega))}dt	
	&=-\int_0^T\int_\Omega M_{\theta_i}(u_{\theta_i})\nabla\mu_{\theta_i}\cdot\nabla\xi dxdt, \nn\\
	\mu_{\theta_i}&=-\Delta\omega_{\theta_i}+W''(u_{\theta_i})\omega_{\theta_i}-\eta\omega_{\theta_i}, \nn\\
	\omega_{\theta_i}&=-\Delta u_{\theta_i}+W'(u_{\theta_i}), \nn\\
	u_{\theta_i}(x,0)&=u_0(x) \quad \mbox{for \; all}\; x\in\Omega. \nn
\end{align}
For simplicity, we write $u_i:=u_{\theta_i},\mu_i:=\mu_{\theta_i},\omega_i:=\omega_{\theta_i}$, and $M_i:=M_{\theta_i}$. 
In Lemma \ref{bnd-lem}, the bounds on the right hand side of \qref{bnd-uN-H2}, \qref{bnd-wN-L2} 
and \qref{bnd-uN_t} depend only on $d,T,\Omega,\eta,u_0$ and $C_j(j=1,2,3,4)$ but not on $\theta$. 
So there exists a constant $C>0$ independent on $\{\theta_i\}_{i=1}^\infty$ such that
\begin{align}
	\|u_i\|_{L^\infty(0,T;H^2(\Omega))} &\leq C, \\
	\|\partial_tu_i\|_{L^2(0,T;(H^2(\Omega))')} &\leq C, \\
	\label{bnd-wi}\|\omega_i\|_{L^\infty(0,T;L^2(\Omega))} &\leq C.
\end{align}
Hence by the Aubin-Lions lemma, there exist a subsequence of $\{u_i\}$ (not relabeled) and a function 
$u\in L^\infty(0,T;H^2(\Omega))\cap C([0,T];H^1(\Omega))\cap C([0,T];C^\alpha(\bar{\Omega}))$ such that as 
$i\rightarrow\infty$,
\begin{align}
	u_i&\rightharpoonup u \quad \mbox{weakly-*\; in}\; L^\infty(0,T;H^2(\Omega)), \\
	\label{ui-conv-C}u_i&\rightarrow u \quad \mbox{strongly\; in}\; C([0,T];C^\alpha(\bar{\Omega})), \\
	\label{ui-conv-W2p}u_i&\rightarrow u \quad \mbox{strongly\; in}\; C([0,T];H^1(\Omega)) \; 
	\mbox{and \; a.e. \; in} \; \Omega_T, \\
	\partial_tu_i&\rightharpoonup \partial_tu \quad \mbox{weakly\; in}\; L^2(0,T;(H^2(\Omega))'),
\end{align}
where $0<\alpha<1/2$. 

Choose a sequence of positive numbers $\{\delta_j\}_{j=0}^\infty$ that monotonically decreases to $0$. 
For each $\delta_j$, by \qref{ui-conv-W2p} and Egorov's theorem, there exists a subset $B_j\subset\Omega_T$ 
with $|\Omega_T\backslash B_j|<\delta_j$ such that $u_i\rightarrow u$ in $B_j$ as $i\rightarrow\infty$. 
We may take	$B_1\subset B_2\subset ...\subset B_j\subset B_{j+1}\subset ...\subset\Omega_T$. 
Define 
\begin{align*}
	B:=\bigcup_{j=1}^{\infty} B_j,
\end{align*}
then $|\Omega_T\backslash B|=0$. 
We also define
\begin{align*}
	P_j:=\{(x,t)\in\Omega_T: u(x,t)>\delta_j\},
\end{align*}
then $P_1\subset P_2\subset ...\subset P_j\subset P_{j+1}\subset ...\subset\Omega_T$. 
Let
\begin{align*}
	P:=\bigcup_{j=1}^{\infty} P_j = \{(x,t)\in\Omega_T:u(x,t)>0\},
\end{align*}
that is, $P$ is the set where $M(u)$ is not degenerate. 
Using a diagonal argument as in Subsection 4.1 in \cite{dlp:weakFCH}, we are able to find a subsequence 
$\{u_{k,N_k}\}_{k=1}^\infty$, which converges to $u$, 
and a function $\zeta:\Omega_T\rightarrow\R^d$ satisfying $\chi_{B\cap P}M(u)\zeta\in L^2(0,T;L^{2d/(d+2)}(\Omega))$ 
such that $u$ and $\zeta$ solve the following weak equation
\begin{align}\label{FCH-w4}
	\int_0^T\left\langle\partial_tu,\phi\right\rangle_{(H^2(\Omega)',H^2(\Omega))}dt
	=-\int_{B\cap P}M(u)\zeta\cdot\nabla\phi dxdt
\end{align}
for all $\phi\in L^2(0,T;H^2(\Omega))$.

\subsection{The relation between $\zeta$ and $u$}
Since $u\in L^\infty(0,T;H^2(\Omega))$, the function $\omega:=-\Delta u+W'(u)$ is completely defined, 
and $\omega\in L^\infty(0,T;L^2(\Omega))$.
The desired relation between $\zeta$ and $u$ is
\begin{align*}
	\zeta=-\nabla\Delta\omega+W'''(u)\nabla u\omega+W''(u)\nabla\omega-\eta\nabla\omega.
\end{align*}
But the terms $\nabla\omega$ and $\nabla\Delta\omega$ are only defined in the sense of distributions 
and may not even be functions. 
So we need some higher regularity conditions on $u$.

\textbf{Claim.} \textit{For any open set $U\subset\Omega$ such that $\nabla\Delta^2u\in L^q(U_T)$ for 
some $q>1$ that may depend on $U$, where $U_T=U\times(0,T)$, we have}
\begin{align} \nn
	\zeta=-\nabla\Delta\omega+W'''(u)\nabla u\omega+W''(u)\nabla\omega-\eta\nabla\omega \quad \mbox{in}\; U_T.
\end{align}

By the equation (71) in \cite{dlp:weakFCH}, we have 
\begin{align}\label{grad-muk}
	\nabla\mu_{k,N_k}=&-\nabla\Delta\omega_{k,N_k}+W'''(u_{k,N_k})\nabla u_{k,N_k}
	\omega_{k,N_k} \nn\\
	&+W''(u_{k,N_k})\nabla\omega_{k,N_k}-\eta\nabla\omega_{k,N_k}.	
\end{align}
Since $\nabla\Delta^2u\in L^q(U_T)$ and 
\[u\in L^\infty(0,T;H^2(\Omega))\cap C([0,T];H^1(\Omega))\cap C([0,T];C^\alpha(\bar{\Omega})),\] 
where $q>1$ and $0<\alpha<1/2$, we have
\begin{align*}
	\Delta^2u &\in L^q(0,T;W^{1,q}(U)), \\	
	\nabla\Delta u &\in L^q(0,T;W^{2,q}(U)), \\
	\Delta u &\in L^q(0,T;W^{3,q}(U))\cap L^\infty(0,T;L^2(U)), \\
	\nabla u &\in L^q(0,T;W^{4,q}(U))\cap L^\infty(0,T;H^1(U))\cap C([0,T];L^2(U)), \\
	u &\in L^q(0,T;W^{5,q}(U))\cap L^\infty(0,T;H^2(U))\nn\\
	&\quad \cap C([0,T];H^1(U))\cap C([0,T];C^\alpha(\bar{U})),
\end{align*}
for the values of $q$ and $\alpha$ indicated.
By \qref{bnd-wi}, there exists a subsequence of $\{\omega_{k,N_k}\}_{k=1}^\infty$ (not relabeled) such that
\begin{align}\label{wk-wconv-L2}
	\omega_{k,N_k}\rightharpoonup \omega \quad \mbox{weakly\; in}\; L^2(0,T;L^2(U)).
\end{align}
Hence, as $k\rightarrow\infty$,
\begin{align}
	\label{grad-wk-wconv}\nabla\omega_{k,N_k}&\rightharpoonup \nabla\omega \quad \mbox{weakly\; in}\; L^2(0,T;(H^1(U))'),\\
	\label{grad-lapl-wk-wconv}\nabla\Delta\omega_{k,N_k}&\rightharpoonup \nabla\Delta\omega \quad \mbox{weakly\; in}
	\; L^2(0,T;(H^3(U))').
\end{align}
By \qref{ui-conv-W2p}, we have
\begin{align}\label{grad-uk-conv-L2}
	\nabla u_{k,N_k}\rightarrow \nabla u \quad \mbox{strongly\; in}\; C([0,T];L^2(U)).
\end{align}
We see that the bounds on the right hand side of \qref{bnd-W''-Linf} and \qref{bnd-W'''-Linf} depend only on 
$d,T,\Omega,\eta,u_0$ and $C_j(j=1,2,3,4)$ but not on $\theta$. 
So there exists a constant $C>0$ independent on $\{\theta_i\}_{i=1}^\infty$ such that
\begin{align}
	\label{bnd-W''-ui}\|W''(u_i)\|_{L^\infty(0,T;L^\infty(\Omega))} &\leq C, \\
\label{bnd-W'''-ui}\|W'''(u_i)\|_{L^\infty(0,T;L^\infty(\Omega))} 	&\leq C.
\end{align}
By \qref{bnd-W'''-ui}, \qref{wk-wconv-L2} and \qref{grad-uk-conv-L2}, we have
\begin{align}\label{W'''-graduk-wk-wconv}
	W'''(u_{k,N_k})\nabla u_{k,N_k}\omega_{k,N_k}\rightharpoonup W'''(u)\nabla u\omega 
	\quad \mbox{weakly\; in}\; L^2(0,T;L^2(U)).
\end{align}
Thus by \qref{grad-muk}, \qref{grad-wk-wconv}, \qref{grad-lapl-wk-wconv}, \qref{bnd-W''-ui} and 
\qref{W'''-graduk-wk-wconv}, we obtain
\begin{align}\label{grad-muk-wconv2}
	\nabla\mu_{k,N_k}\rightharpoonup -\nabla\Delta\omega+W'''(u)\nabla u\omega+W''(u)\nabla\omega-\eta\nabla\omega 
\end{align}
weakly in $L^2(0,T;(H^3(U))')$. 
From Section 4.1 in \cite{dlp:weakFCH}, for every $j=1,2,...$, $\nabla\mu_{k,N_k} \rightharpoonup\zeta$ weakly 
in $L^2(B_j\cap P_j)$ as $k\rightarrow\infty$. 
So by the uniqueness of weak limits, we get
\begin{align*}
	\zeta=-\nabla\Delta\omega+W'''(u)\nabla u\omega+W''(u)\nabla\omega-\eta\nabla\omega \quad\mbox{in}\; B\cap P\cap U_T.
\end{align*}
We may extend the function $\zeta$ from $B\cap P\cap U_T$ to $U_T$ by defining it to be 
$-\nabla\Delta\omega+W'''(u)\nabla u\omega+W''(u)\nabla\omega-\eta\nabla\omega$ in $U_T\backslash(B\cap P)$. 
This completes the proof of the claim.

Similar to \cite{dlp:weakFCH}, we define the set
\begin{align*}
	\tilde{\Omega}_T:=\bigcup\biggl\{U_T=U\times(0,T): &\; U \; \mbox{is\; open\; in\;}\Omega,\; \nabla\Delta^2u\in 
	L^q(U_T) \nn\\
	 &\mbox{ for some } q>1 
	\;\mbox{that\; may\; depend\; on\;} U\biggr\}.
\end{align*}
Then $\tilde{\Omega}_T$ is open in $\Omega_T$ and
\begin{align*}
	\zeta=-\nabla\Delta\omega+W'''(u)\nabla u\omega+W''(u)\nabla\omega-\eta\nabla\omega \quad\mbox{in}\; \tilde{\Omega}_T.
\end{align*}
So $\zeta$ is defined in $(B\cap P)\cup\tilde{\Omega}_T$. To extend $\zeta$ to $\Omega_T$, notice that
\begin{align*}
	\Omega_T\backslash((B\cap P)\cup\tilde{\Omega}_T)\subset (\Omega_T\backslash B)\cup(\Omega_T\backslash P).
\end{align*}
Since $|\Omega_T\backslash B|=0$ and $M(u)=0$ in $\Omega_T\backslash P$, the value of $\zeta$ outside 
of $(B\cap P)\cup\tilde{\Omega}_T$ does not contribute to the integral on the right hand side of \qref{FCH-w4}, 
so we may just let $\zeta=0$ outside of $(B\cap P)\cup\tilde{\Omega}_T$.

\subsection{Energy inequality}
By the energy inequality \qref{ener-ineq1}, for any $k,j=1,2,...$, we have
\begin{align}\label{ener-ineq3}
	&\int_\Omega\frac{1}{2}|\omega_{k,N_k}(x,t)|^2-\eta\left(\frac{1}{2}|\nabla u_{k,N_k}(x,t)|^2+W(u_{k,N_k}(x,t))\right)dx  
	 \nonumber \\
	&+\int_{\Omega_t\cap B_j\cap P_j} M_{k,N_k}(u_{k,N_k}(x,\tau))|\nabla\mu_{k,N_k}(x,\tau)|^2dxd\tau  \nonumber \\
	&\leq\int_\Omega\frac{1}{2}|-\Delta u_0+W'(u_0)|^2-\eta\left(\frac{1}{2}|\nabla u_0|^2+W(u_0)\right)dx.
\end{align}
Using the convergence in \qref{ui-conv-C} and \qref{ui-conv-W2p} and the fact (from \cite{dlp:weakFCH}) that
\begin{align*}
	\chi_{B_j\cap P_j}\sqrt{M_{k,N_k}(u_{k,N_k})}\nabla\mu_{k,N_k}\rightharpoonup\chi_{B_j\cap P_j}\sqrt{M(u)}\zeta
\end{align*}
weakly in $L^2(0,T;L^{2d/(d+2)}(\Omega)) $ 
as $k\rightarrow\infty$.
Taking the limit as $k\rightarrow\infty$ first and then $j\rightarrow\infty$ in \qref{ener-ineq3}, we obtain the 
energy inequality \qref{ener-ineq1}.

\subsection{Nonnegative weak solution with positive initial data}
Assume, in addition, that the initial data $u_0(x)>0$ for all $x\in\Omega$. 
By \qref{bnd-neg1}, there exists a constant $C$ independent on $\{\theta_i\}_{i=1}^\infty \subset (0,1)$ such that for each $i=1,2,...$,
\begin{align}\label{bnd-neg3}
	\textup{ess sup}_{0\leq t \leq T}\int_\Omega|(u_i(x,t))_-+\theta_i|^2dx\leq C(\theta_i^2+\theta_i+\theta_i^{1/2}).
\end{align}
By the convergence in \qref{ui-conv-C} and \qref{ui-conv-W2p}, and since $u$ is continuous in $\Omega_T$, passing to the limit as $i\rightarrow\infty$ in \qref{bnd-neg3} yields $u(x,t)\geq 0$ for all $(x,t)\in\Omega_T$. 
Furthermore, since $u_0>0$ in $\Omega$, by the continuity of $u$ in $\Omega_T$, $u$ is not constantly zero in $\Omega_T$. 
This completes the proof of Theorem \ref{thm-main2}.

\section*{Acknowledgments}

This work was supported by the US National Science Foundation (NSF)
 under grants DMS-1802863 and DMS-1815746 (Shibin Dai), DMS-1813203 (Keith Promislow), and 
 the NSF 
of Guangdong Province, China under grant 2020A1515010554 (Qiang Liu).

\bibliographystyle{plain}
\bibliography{dai-bib}

\end{document}